\newcommand*{\R}{\ensuremath{\mathbb{R}}}
\newcommand*{\N}{\ensuremath{\mathbb{N}}}
\newcommand*{\Z}{\ensuremath{\mathbb{Z}}}
\newcommand*{\borel}{\ensuremath{\mathfrak{B}}}
\newtheorem{theorem}{Theorem}
\newtheorem{definition}[theorem]{Definition}
\newtheorem{corollary}[theorem]{Corollary}
\newtheorem{lemma}[theorem]{Lemma}
\newtheorem{proposition}[theorem]{Proposition}
\begin{document}
\title{How fast increasing powers of a continuous random variable
converge to Benford's law}
\author{Michał Ryszard Wójcik}
\address{Institute of Geography and Regional Development,
University of Wrocław\\
pl. Uniwersytecki 1, 50-137 Wrocław, Poland\\
telephone: +48 71 375 22 44,
email: michal.wojcik@uni.wroc.pl}

\begin{abstract}
It is known that
increasing powers of a continuous random variable
converge in distribution to Benford's law as the exponent approaches infinity.
The rate of convergence has been estimated using Fourier analysis,
but we present an elementary method, which is easier to apply
and provides a better estimation in the widely studied case
of a uniformly distributed random variable.
\end{abstract}

\begin{keyword}
Benford's law\sep
uniform distribution modulo 1\sep
mantissa distribution\sep significand distribution\sep
Fourier coefficients
\MSC{60-02}
\end{keyword}

\maketitle

\section{Introduction}

For a fixed choice of base $b\in(1,\infty)$,
let the significand of a positive number $x>0$ be defined as
$S_b(x)=xb^{-\lfloor\log_b x\rfloor}$
in accordance with Definition 2.3 in \cite{basictheory}.
It is well-known that if $X$ is a positive continuous random variable
with any density then $S_b(X^n)$ converges in distribution
as $n\to\infty$ to Benford's law for base $b$,
which is equivalently stated as $n\log_bX\bmod{1}$ converges in distribution
to $U[0,1)$ as $n\to\infty$, e.g.~\cite[Th.~4.17]{basictheory},
\cite[Th.~3]{appliedfourier}, \cite{lolbert}.
This fact has been proposed as an explanation why population statistics
conform to Benford's Law, e.g.~\cite{ross}, \cite{population}.

For practical applications and reliable insights into natural phenomena
the concept of convergence in distribution is too abstract.
We need a precise estimation of the rate of converge and a tangible
measure of the distance between a given power of the initial random variable
and its limit. This is provided by the concept of the total variation distance
between two probability measures.

Let $X,Y\colon\Omega\to\R$ be two random variables.
Let $$\delta(X,Y)=\sup\big\{|P(X\in A)-P(Y\in A)|\colon A\in\borel(\R)\big\}.$$
If $X$ and $Y$ have densities $f$ and $g$ then $\delta(X,Y)=
\frac{1}{2}\int_{-\infty}^\infty|f(x)-g(x)|dx.$

Note that convergence according to this distance is stronger than
convergence in distribution, and if the rate of convergence in terms of this distance
is provided then we have the desired tangible estimation.

\section{Presentation of the method}

We shall estimate
\begin{equation}
\delta\big(X\bmod{1},U[0,1)\big)=
\sup_{A\in\borel([0,1))}\big|P(X\bmod{1}\in A)-\lambda(A)\big|
\end{equation}
in terms of the total variation of the density of $X$.

\begin{lemma}\label{keylemma}
Let $X$ be a real-valued random variable with density $f$.
Then $$\delta\big(X\bmod{1},U[0,1)\big)\le\frac{1}{2}
\sum_{k\in\Z}\int_{k}^{k+1}|f(x)-s_k|dx$$
where $s_k=\int_k^{k+1}f(t)dt.$
\end{lemma}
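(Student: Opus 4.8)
The plan is to reduce the claim to the density formula for $\delta$ stated in the introduction, after first identifying the density of $X\bmod 1$. First I would compute this density explicitly: for any Borel set $A\subseteq[0,1)$, the event $\{X\bmod 1\in A\}$ coincides with $\{X\in\bigcup_{k\in\Z}(A+k)\}$, where the translates $A+k$ are pairwise disjoint. Hence $P(X\bmod 1\in A)=\sum_{k\in\Z}\int_{A+k}f(x)\,dx=\sum_{k\in\Z}\int_A f(y+k)\,dy$, and interchanging the nonnegative sum with the integral (Tonelli) gives $P(X\bmod 1\in A)=\int_A g(y)\,dy$ with the folded density $g(y)=\sum_{k\in\Z}f(y+k)$ on $[0,1)$.

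Since $U[0,1)$ has density equal to the constant $1$ on $[0,1)$, the density formula from the introduction then yields $\delta\big(X\bmod 1,U[0,1)\big)=\tfrac12\int_0^1|g(y)-1|\,dy$. The key algebraic observation is that $\sum_{k\in\Z}s_k=\sum_{k\in\Z}\int_k^{k+1}f=\int_{-\infty}^\infty f=1$, because $f$ is a probability density. This lets me rewrite the constant $1$ inside the integral as $\sum_{k}s_k$ and match it term by term against $g$, so that $g(y)-1=\sum_{k\in\Z}\big(f(y+k)-s_k\big)$.

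From here I would apply the triangle inequality and Tonelli once more to obtain $\int_0^1|g(y)-1|\,dy\le\sum_{k\in\Z}\int_0^1|f(y+k)-s_k|\,dy$, and then substitute $x=y+k$ in each term, turning $\int_0^1|f(y+k)-s_k|\,dy$ into $\int_k^{k+1}|f(x)-s_k|\,dx$. Multiplying by $\tfrac12$ gives exactly the asserted bound.

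The only point requiring care is the justification of the two interchanges of summation and integration; both are instances of Tonelli's theorem applied to nonnegative integrands ($f\ge 0$ in the first, $|f(y+k)-s_k|\ge 0$ in the second), so no convergence subtlety arises. I expect the conceptual crux to be the identification of $g$ as the density of $X\bmod 1$, after which the estimate is a clean application of the triangle inequality together with the normalization $\sum_k s_k=1$.
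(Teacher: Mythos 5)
Your proof is correct, but it runs in the opposite order from the paper's: you fold first and then compare, while the paper compares first and then folds. Concretely, you compute the wrapped density $g(y)=\sum_{k\in\Z}f(y+k)$ of $X\bmod 1$, write the uniform density as $1=\sum_{k\in\Z}s_k$, and get the bound from the pointwise triangle inequality $|g(y)-1|\le\sum_{k\in\Z}|f(y+k)-s_k|$ followed by Tonelli and the substitution $x=y+k$. The paper never computes the density of $X\bmod 1$: it introduces the step function $s(x)=\sum_{k\in\Z}\mathds{1}_{[k,k+1)}(x)\,s_k$ as a density on $\R$ of a random variable $S$ satisfying $S\bmod 1\sim U[0,1)$ exactly, observes that $P(X\bmod 1\in A)-\lambda(A)=\int_E(f-s)$ with $E=\bigcup_{k\in\Z}(A+k)$, and then bounds the supremum over such periodized sets $E$ by the supremum over \emph{all} Borel sets, i.e.\ by $\frac{1}{2}\int_\R|f-s|$, which is the asserted sum. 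So both arguments hinge on the same comparison object (the step function with matching cell averages $s_k$), but the loss is incurred at different points: in yours it is the triangle inequality on densities; in the paper's it is the data-processing fact that applying $x\mapsto x\bmod 1$ cannot increase total variation distance. Your version is more explicit and shows exactly what is discarded pointwise; the paper's is slightly leaner, since working at the level of measures requires no interchange of sum and integral and no discussion of the almost-everywhere finiteness of the wrapped sum $g$ --- a point you should state explicitly when you split $g(y)-1$ into the termwise series, though since both series have nonnegative terms this costs only one line.
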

\begin{proof}
Let $s(x)=\sum_{k\in\Z}\mathds{1}_{[k,k+1)}(x)\int_k^{k+1}f(t)dt.$
Then $s$ is the density of some random variable $S$
with $S\bmod{1}\sim U[0,1)$.
Thus for any $A\in\borel([0,1))$,
$\big|P(X\bmod{1}\in A)-\lambda(A)\big|
=\big|\int_{E}f(x)dx-\int_{E}s(x)dx\big|,$
where $E=\bigcup_{k\in\Z}(A+k)$. Therefore,
$\delta\big(X\bmod{1},U[0,1)\big)$ does not exceed
$$\sup_{E\in\borel(\R)}\Big|\int_{E}f(x)dx-\int_{E}s(x)dx\Big|=
\frac{1}{2}\int_{-\infty}^\infty|f(x)-s(x)|dx.$$
\end{proof}

\begin{proposition}\label{keyinequalities}
Let $f\colon[a,b]\to[c,d]$ be integrable and let $y=\frac{1}{b-a}\int_a^bf(x)dx$.
Then\begin{equation}\label{banal1}
\int_a^b|f(x)-y|dx\le\frac{(b-a)(d-c)}{2}.\end{equation}
Moreover, if $f$ is monotonic and convex then\begin{equation}\label{banal2}
\int_a^b|f(x)-y|dx\le\frac{(b-a)(d-c)}{4}.\end{equation}
\end{proposition}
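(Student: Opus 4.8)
The plan is to prove both bounds by elementary means, starting from the single observation that $\int_a^b(f(x)-y)\,dx=0$. This says the positive and negative deviations of $f$ from its mean share a common integral $P:=\int_a^b(f-y)^+\,dx=\int_a^b(f-y)^-\,dx$, so that $\int_a^b|f-y|\,dx=2P$ and it suffices to bound $P$.

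For \eqref{banal1} I would control $P$ through the two level sets $E_+=\{f>y\}$ and $E_-=\{f<y\}$. On $E_+$ we have $0\le f-y\le d-y$ and on $E_-$ we have $0\le y-f\le y-c$, so with $p=\lambda(E_+)$ and $q=\lambda(E_-)$ one gets $P\le p(d-y)$ and $P\le q(y-c)$. In the nondegenerate case $c<y<d$ these rearrange to $p\ge P/(d-y)$ and $q\ge P/(y-c)$, whence
\[
b-a\ \ge\ p+q\ \ge\ P\,\frac{(y-c)+(d-y)}{(d-y)(y-c)}\ =\ P\,\frac{d-c}{(d-y)(y-c)}.
\]
Using $(d-y)(y-c)\le\big(\tfrac{d-c}{2}\big)^2$ gives $P\le\tfrac14(b-a)(d-c)$, hence $2P\le\tfrac12(b-a)(d-c)$; the degenerate cases $y=c$ and $y=d$ force $f\equiv y$ almost everywhere, where \eqref{banal1} is trivial.

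For the sharpened bound \eqref{banal2} I would first normalize. An affine change of variable reduces to $[a,b]=[0,1]$, and the reflection $x\mapsto1-x$ (which preserves convexity and the value of the integral) lets me assume $f$ is increasing. An increasing convex $f$ attains its extremes at the endpoints, so replacing $[c,d]$ by the true range $[f(0),f(1)]$ only tightens the claim; rescaling then reduces everything to proving $\int_0^1|g-\bar g|\,dx\le\tfrac14$ for increasing convex $g\colon[0,1]\to[0,1]$ with $g(0)=0$, $g(1)=1$ and $\bar g=\int_0^1 g$. The key idea is that $g\mapsto\int_0^1|g-\bar g|\,dx$ is a convex functional, so over the convex family of admissible $g$ it is maximized at an extreme point; since $g'$ runs through the nonnegative nondecreasing functions of total mass $1$, those extreme points are the one-kink profiles $g_t(x)=\max\{0,(x-t)/(1-t)\}$ for $t\in[0,1)$, with the linear function appearing as $t=0$.

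The main obstacle is precisely this last step. Everything collapses to the single-variable quantity $\Phi(t)=\int_0^1|g_t-\bar g_t|\,dx$, and the refinement stands or falls on whether $\max_{t\in[0,1)}\Phi(t)\le\tfrac14$. This is where I expect the real difficulty, and some genuine risk, to lie: the one-kink profiles concentrate their deviation more efficiently than the linear one, so a direct evaluation of $\Phi(t)$ is needed to decide whether the value $\tfrac14$ produced by $t=0$ really dominates, or whether the true extremal constant is larger. I would therefore carry out that computation explicitly and treat it as the crux of the argument before committing to the stated factor.
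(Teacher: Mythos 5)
Your argument for (\ref{banal1}) is complete and correct, and in fact yields the sharper estimate $\int_a^b|f-y|\,dx\le 2(b-a)(d-y)(y-c)/(d-c)$. There is nothing in the paper to compare it against: Proposition \ref{keyinequalities} is stated without any proof (only the sharpness remarks follow it), so your level-set argument genuinely fills that gap for the first inequality.

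For (\ref{banal2}) your reduction is sound and the risk you flagged is exactly where the matter is decided --- but the answer goes against the proposition. Carrying out the computation you postponed: $\bar g_t=\frac{1-t}{2}$, the graph of $g_t$ crosses this level at $x^*=t+\frac{(1-t)^2}{2}$, and
\[
\Phi(t)=\int_0^1|g_t-\bar g_t|\,dx=2\int_{x^*}^1\frac{x-x^*}{1-t}\,dx=\frac{(1-t)(1+t)^2}{4},
\]
whose derivative is proportional to $(1+t)(1-3t)$, so the maximum over $t\in[0,1)$ is at $t=\frac13$ with $\Phi(\tfrac13)=\frac{8}{27}>\frac14$. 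No extreme-point machinery is even needed to exploit this: the single function $g_{1/3}(x)=\max\{0,(3x-1)/2\}$ is monotonic, convex, maps $[0,1]$ into $[0,1]$, has mean $\frac13$, and direct integration gives $\int_0^1|g_{1/3}-\tfrac13|\,dx=\frac19+\frac1{27}+\frac4{27}=\frac{8}{27}$. Hence (\ref{banal2}) is \emph{false} as stated, and the paper's remark that the straight line is the worst case is wrong: one-kink profiles beat lines. (Strictness is no escape, since $(1-\epsilon)g_{1/3}+\epsilon x^2$ is strictly increasing, strictly convex, and still exceeds $\frac14$ for small $\epsilon$.) Your Choquet-type argument, pushed through rigorously, shows $\frac{8}{27}$ is in fact the sharp constant for monotone convex functions.

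This matters beyond the proposition itself, because inequality (\ref{convexboost}) of Theorem \ref{maintheorem} and Theorem \ref{mojeszacowanie} both rest on (\ref{banal2}). With the correct constant $\frac{8}{27}$ the method yields $\delta\le\frac{4}{27}\cdot\frac{\ln b}{n}$, and since $\frac{4}{27}>\frac{1}{2\sqrt{12}}$, the paper's claimed superiority over Boyle's Fourier bound (Corollary \ref{fourierszacowanie}) no longer follows from the general statement. The bound $\frac{\ln b}{8n}$ of Theorem \ref{mojeszacowanie} is most likely still true --- the paper's own table shows the exact values lie below it --- but to prove it one must verify the $\frac14$-inequality directly for the exponential functions $x\mapsto Cb^{x/n}$ on unit intervals (which are close enough to affine), rather than invoke the false general proposition.
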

The estimation in (\ref{banal1}) cannot be improved because
in the worst possible case we get equality if $f$ assumes exactly two values
on sets of equal measure.
The estimation in (\ref{banal2}) cannot be improved because
in the worst possible case we get equality when $f$ is a straight line.

\begin{definition}\label{totalvariation}\textnormal{
For $f\colon\R\to\R$, let the total variation of $f$ restricted to
the minimal integer-delineated interval containing its support be defined as
$$TV(f)=\sup\Big\{\sum_{n\in\N}|f(x_{n+1})-f(x_n)|\colon
\{x_n\}_n\textnormal{ is an increasing sequence in }(n,m)\Big\},$$
where
$$n=\inf\big\{k\in\Z\colon f\big([k,k+1)\big)\not=\{0\}\big\},$$
$$m=\sup\big\{k\in\Z\colon f\big((k-1,k]\big)\not=\{0\}\big\}.$$
It may happen that $n=-\infty$ or $m=\infty$.}
\end{definition}

For example, if $f\colon\R\to\R$ is the density of $U[0,1)$, then $TV(f)=0$,
while its total variation on the interval $[0,2]$ equals 1
and its total variation on the interval $[-1,2]$ equals 2.

Note that if $f\colon\R\to\R$ is monotonic on $(n,m)$, where
$n\in\Z\cup\{-\infty\}$, $m\in\Z\cup\{\infty\}$,
then $TV(f)=\sup f\big((n,m)\big)-\inf f\big((n,m)\big)$.
\begin{theorem}\label{maintheorem}
Let $X$ be a real-valued random variable with density $f$.
Then\begin{equation}
\delta\big(X\bmod{1},U[0,1)\big)\le\frac{TV(f)}{4}.\end{equation}
Moreover, if $P(n<X<m)=1$ with $n\in\Z\cup\{-\infty\}$,
$m\in\Z\cup\{\infty\}$
and $f$ is monotonic and convex on the interval $(n,m)$, then
\begin{equation}\label{convexboost}
\delta\big(X\bmod{1},U[0,1)\big)\le
\frac{\sup f\big((n,m)\big)-\inf f\big((n,m)\big)}{8}.\end{equation}
\end{theorem}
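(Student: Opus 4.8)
The plan is to combine Lemma~\ref{keylemma} with Proposition~\ref{keyinequalities} applied separately on each unit interval $[k,k+1]$, and then to recognize the resulting sum of local oscillations as a lower bound for the total variation.

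First I would start from the estimate of Lemma~\ref{keylemma}, namely $\delta\big(X\bmod 1,U[0,1)\big)\le\frac12\sum_{k\in\Z}\int_k^{k+1}|f(x)-s_k|\,dx$. On the interval $[k,k+1]$ the constant $s_k=\int_k^{k+1}f$ is exactly the mean value $\frac{1}{(k+1)-k}\int_k^{k+1}f$, so it is the quantity $y$ appearing in Proposition~\ref{keyinequalities}. Writing $c_k=\inf_{[k,k+1]}f$ and $d_k=\sup_{[k,k+1]}f$, inequality~(\ref{banal1}) with $b-a=1$ gives $\int_k^{k+1}|f-s_k|\,dx\le\frac{d_k-c_k}{2}$. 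If $f$ is unbounded on some unit interval then $TV(f)=\infty$ and the claim is vacuous, so I may assume $d_k-c_k<\infty$ throughout. Summing over $k$ then yields $\delta\big(X\bmod 1,U[0,1)\big)\le\frac14\sum_{k\in\Z}(d_k-c_k)$.

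The key step is to establish $\sum_{k\in\Z}(d_k-c_k)\le TV(f)$. For this I would use two facts: first, that the oscillation $d_k-c_k$ of $f$ on a single interval $[k,k+1]$ is bounded by the variation of $f$ there, seen by taking an increasing pair of points at which $f$ comes arbitrarily close to its infimum and its supremum; second, that total variation is additive over a partition of $(n,m)$ into the unit intervals $[k,k+1]$. For $k<n$ or $k\ge m$ the summand vanishes by the very definitions of $n$ and $m$, so the sum effectively runs from $k=n$ to $k=m-1$, and combining the two facts bounds it by the variation of $f$ over the whole of $(n,m)$, which is exactly $TV(f)$ by Definition~\ref{totalvariation}. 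This gives the first bound $\delta\big(X\bmod 1,U[0,1)\big)\le TV(f)/4$. I expect this identification of the sum of oscillations with the total variation to be the only delicate point, chiefly because of the possibly infinite endpoints $n=-\infty$ or $m=\infty$, which force the additivity argument to run over countably many intervals.

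For the refined bound I would repeat the argument with the sharper inequality~(\ref{banal2}). Under the hypothesis that $f$ is monotonic and convex on $(n,m)$, its restriction to each $[k,k+1]\subseteq(n,m)$ is again monotonic and convex, so~(\ref{banal2}) applies and replaces the factor $\frac12$ by $\frac14$, producing $\delta\big(X\bmod 1,U[0,1)\big)\le\frac18\sum_{k\in\Z}(d_k-c_k)$. Finally, monotonicity on $(n,m)$ makes the oscillations telescope exactly: $d_k-c_k=|f(k+1)-f(k)|$ with a constant sign across all $k$, so $\sum_{k\in\Z}(d_k-c_k)=\sup f\big((n,m)\big)-\inf f\big((n,m)\big)$, which is precisely $TV(f)$ by the remark preceding the theorem. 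Substituting this into the previous inequality gives the claimed estimate~(\ref{convexboost}).
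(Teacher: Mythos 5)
Your proof is correct and takes essentially the same approach as the paper: the paper's own proof is just the one-line remark that the theorem is ``immediate'' from Lemma~\ref{keylemma}, Proposition~\ref{keyinequalities} and Definition~\ref{totalvariation}, and your write-up supplies exactly the details that remark suppresses (applying Proposition~\ref{keyinequalities} on each $[k,k+1]$ with $y=s_k$, bounding the sum of oscillations by $TV(f)$, and using the sharper inequality~(\ref{banal2}) plus monotone telescoping for the convex case). No discrepancy to report.
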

\begin{proof}
Immediate from Lemma \ref{keylemma},
Proposition \ref{keyinequalities} and Definition \ref{totalvariation}.
\end{proof}

\begin{corollary}\label{metoda}
Let $X$ be a real-valued random variable with a density $f$ such that $TV(f)<\infty$.
Then $nX\bmod{1}\to U[0,1)$ in distribution as $n\to\infty$.
Moreover,
$$\delta\big(nX\bmod{1},U[0,1)\big)\le\frac{TV(f)}{4n}
\textnormal{ \ \ for all }n\in\N.$$
\end{corollary}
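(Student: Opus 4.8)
The plan is to apply Theorem~\ref{maintheorem} not to $X$ directly but to the dilated variable $nX$, and to control how $TV$ behaves under this dilation. First I would record the density of $nX$: for a fixed positive integer $n$, the change-of-variables formula for the orientation-preserving affine map $x\mapsto nx$ shows that $nX$ has density $g(y)=\frac{1}{n}f(y/n)$. The first inequality of Theorem~\ref{maintheorem}, applied to $nX$, then gives $\delta\big(nX\bmod 1,U[0,1)\big)\le\frac{TV(g)}{4}$, so the entire corollary reduces to the single estimate $TV(g)\le\frac{1}{n}TV(f)$. The claimed convergence in distribution is then automatic: the bound $\frac{TV(f)}{4n}$ tends to $0$, and as remarked after the definition of $\delta$, convergence in total variation is stronger than convergence in distribution.

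The crux is therefore the inequality $TV(g)\le\frac{1}{n}TV(f)$, where the definition of $TV$ through the minimal integer-delineated interval must be tracked carefully under the dilation. Let $(p,q)$ be the minimal integer-delineated interval attached to $f$ in Definition~\ref{totalvariation}, so that $f$ vanishes off $[p,q]$ and $TV(f)$ is the supremum of $\sum_i|f(x_{i+1})-f(x_i)|$ over increasing sequences in $(p,q)$. Since $g(y)=\frac{1}{n}f(y/n)$ vanishes whenever $y<np$ or $y>nq$, and since $np$ and $nq$ are integers, the minimal integer-delineated interval $(p',q')$ attached to $g$ satisfies $np\le p'<q'\le nq$; dividing by $n$ gives the containment $(p'/n,q'/n)\subseteq(p,q)$.

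Finally I would substitute $x_i=y_i/n$. For any increasing sequence $\{y_i\}$ in $(p',q')$ the points $\{y_i/n\}$ form an increasing sequence in $(p'/n,q'/n)$, and $\sum_i|g(y_{i+1})-g(y_i)|=\frac{1}{n}\sum_i|f(y_{i+1}/n)-f(y_i/n)|$. Taking the supremum and invoking the containment $(p'/n,q'/n)\subseteq(p,q)$ together with the obvious monotonicity of total variation under interval inclusion, I obtain $TV(g)=\frac{1}{n}\,V\le\frac{1}{n}TV(f)$, where $V$ denotes the total variation of $f$ over $(p'/n,q'/n)$. Chaining this with Theorem~\ref{maintheorem} yields $\delta\big(nX\bmod 1,U[0,1)\big)\le\frac{TV(f)}{4n}$.

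The step I expect to be the main obstacle is the bookkeeping of the previous paragraph: one must confirm that dilating the support by the integer factor $n$ can only shrink, never enlarge, the minimal integer-delineated interval relative to the naive $n$-fold scaling, so that the variation accumulated by $g$ cannot exceed $\frac{1}{n}TV(f)$. Once this containment is secured, the density formula, the change of variables inside the sum, and the passage to convergence in distribution are all routine.
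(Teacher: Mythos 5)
Your proposal is correct and takes essentially the same approach as the paper: the paper's entire proof is that $nX$ has density $f_n(x)=f(x/n)/n$, ``so $TV(f_n)=TV(f)/n$,'' followed by an appeal to Theorem~\ref{maintheorem}. Your more cautious inequality $TV(f_n)\le TV(f)/n$ is in fact the right thing to prove, since the paper's asserted equality can fail when rescaling shrinks the minimal integer-delineated interval relative to the naive $n$-fold scaling (e.g.\ $f=2\cdot\mathds{1}_{[1/2,1]}$ and $n=2$ give $f_2=\mathds{1}_{[1,2]}$ with $TV(f_2)=0<1=TV(f)/2$), while the inequality---which your dilation-and-containment argument establishes---is all the corollary needs.
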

\begin{proof}
The density of $nX$ is $f_n(x)=f(x/n)/n$, so $TV(f_n)=TV(f)/n$.
\end{proof}

Note that we are not limited to integers.
In fact, Corollary \ref{metoda} could be stated so that
$aX\bmod{1}\to U[0,1)$ in distribution as $a\to\infty$
through all real values $a$, but the integer-delineation notion of $TV(f)$
would have to be abandoned and replaced with the more general notion
$$TV'(f)=\sup\Big\{\sum_{n\in\N}|f(x_{n+1})-f(x_n)|\colon
\{x_n\}_n\textnormal{ is an increasing sequence in }\R\Big\},$$
which gives double the value of the original $TV(f)$
in certain naturally occurring cases.

\section{Application of the method to a classical case}

We shall test the accuracy of our method by applying it to the classical
case of $X\sim U[1,10]$ and its integer powers $X^n$,
which has often been taken up in the literature on Benford's law,
e.g.~\cite{sarkar}, \cite{turner} and \cite{appliedfourier}.

\begin{theorem}\label{mojeszacowanie}
Let $b>1$. Let $Y\sim U[1,b]$ and $X=\log_bY$.
Then \begin{equation}\label{gdfdtftygvvd}
\delta\big(nX\bmod{1},U[0,1)\big)\le\frac{\ln b}{8n}
\textnormal{ \ \ for all }n\in\N.\end{equation}
\end{theorem}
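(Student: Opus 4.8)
The plan is to apply Corollary~\ref{metoda} to the random variable $X=\log_b Y$, so the central task is to compute (or usefully bound) the density of $X$ and its total variation $TV$. First I would find the density of $X$. Since $Y\sim U[1,b]$ has density $\frac{1}{b-1}$ on $[1,b]$, and $X=\log_b Y$, the inverse transformation is $Y=b^X$ with $\frac{dy}{dx}=b^x\ln b$. The change-of-variables formula then gives the density $f(x)=\frac{b^x\ln b}{b-1}$ for $x\in[0,1]$ and $f(x)=0$ elsewhere, because $Y\in[1,b]$ corresponds exactly to $X\in[0,1]$.

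The next step is to observe that this density $f$ lives on the single integer-delineated interval $[0,1]$, so in the notation of Definition~\ref{totalvariation} we have $n=0$ and $m=1$. On $(0,1)$ the function $f(x)=\frac{b^x\ln b}{b-1}$ is exponential, hence both monotonic (increasing, since $b>1$) and convex. This is precisely the situation covered by the ``moreover'' clause of Theorem~\ref{maintheorem}, inequality~(\ref{convexboost}), which rewards convexity with an extra factor of $\tfrac12$.

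I would then compute $\sup f\big((0,1)\big)-\inf f\big((0,1)\big)$. Because $f$ is increasing on $(0,1)$, the infimum is the limiting value at $0$, namely $f(0)=\frac{\ln b}{b-1}$, and the supremum is the limiting value at $1$, namely $f(1)=\frac{b\ln b}{b-1}$. Their difference is $\frac{(b-1)\ln b}{b-1}=\ln b$. Applying~(\ref{convexboost}) directly yields $\delta\big(X\bmod 1,U[0,1)\big)\le\frac{\ln b}{8}$, and then scaling by $n$ via Corollary~\ref{metoda} — where $TV(f_n)=TV(f)/n$ and the convexity bound scales identically — gives $\delta\big(nX\bmod 1,U[0,1)\big)\le\frac{\ln b}{8n}$, which is the claimed inequality~(\ref{gdfdtftygvvd}).

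I do not anticipate a genuine obstacle here: the real content was isolated into the general machinery of Theorem~\ref{maintheorem} and Corollary~\ref{metoda}, so this theorem is essentially a verification that the density is convex and monotonic together with a one-line computation of the range of $f$. The only point requiring a little care is confirming that the convexity boost of Theorem~\ref{maintheorem} interacts correctly with the dilation in Corollary~\ref{metoda}; since dilating $X$ to $nX$ preserves both monotonicity and convexity while dividing the range of the density by $n$, the factor of $8$ is retained and the $n$ appears in the denominator as expected.
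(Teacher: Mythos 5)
Your proposal is correct and follows essentially the same route as the paper: the paper simply writes down the density of $nX$ directly, namely $f(x)=\frac{\mathds{1}_{[0,1]}(x/n)}{n}\frac{\ln b}{b-1}b^{x/n}$, observes that it is increasing and convex on $[0,n]$ with range $\frac{\ln b}{n}$, and invokes Theorem~\ref{maintheorem}(\ref{convexboost}), which is exactly your ``dilation preserves monotonicity and convexity and divides the range by $n$'' step made explicit. Your care in noting that a literal application of Corollary~\ref{metoda} would only give $\frac{\ln b}{4n}$, so the factor-of-two convexity boost must be carried through the scaling by hand, is precisely the right observation and matches what the paper does.
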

\begin{proof}
If $n\in\N$, then 
$nX=n\log_bY$ has density $$f(x)=\frac{\mathds{1}_{[0,1]}(x/n)}{n}\frac{\ln b}{b-1}b^{x/n},$$
which is increasing and convex on $[0,n]$ with
$$\frac{\ln b}{n(b-1)}\le f(x)\le \frac{\ln b}{n(b-1)}b\text{ \ \ for all }x\in[0,n].$$
The proof is finished by invoking Theorem \ref{maintheorem}(\ref{convexboost}).
\end{proof}

Let us compare the estimation in Theorem \ref{mojeszacowanie}
with the Fourier series approach presented by Jeff Boyle, \cite{appliedfourier}.

\section{Comparison of the method with a Fourier analysis approach}

\begin{proposition}
Let $X\colon\Omega\to[0,1)$ be a random variable with density $f$.
Suppose that $f\in L^2([0,1])$.
Let $n\in\N$.
Then$$\delta\big(nX\bmod{1},U[0,1)\big)\le
\frac{1}{2}\sqrt{\sum_{k\in\Z\setminus\{0\}}|\hat{f}(nk)|^2}$$
where $\hat{f}(k)$ are the Fourier coefficients of $f$, that is
$\hat{f}(k)=\int_0^1f(x)e^{-2\pi ikx}dx$.
\end{proposition}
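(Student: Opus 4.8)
The plan is to identify the density of $nX\bmod{1}$ as a periodized version of $f$, pass from the total variation ($L^1$) distance to an $L^2$ distance by Cauchy--Schwarz, and then evaluate the resulting $L^2$ norm through Parseval's identity after computing the Fourier coefficients of the periodized density.

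First I would write down the density $g$ of $nX\bmod{1}$. Since $X$ has density $f$ supported on $[0,1)$, the variable $nX$ has density $f(x/n)/n$ on $[0,n)$, and folding this back into $[0,1)$ gives $g(t)=\frac{1}{n}\sum_{j=0}^{n-1}f\big((t+j)/n\big)$ for $t\in[0,1)$. Because the density of $U[0,1)$ is the constant $1$, the density formula for total variation distance yields
$$\delta\big(nX\bmod{1},U[0,1)\big)=\frac{1}{2}\int_0^1|g(t)-1|\,dt.$$
I would then apply the Cauchy--Schwarz inequality on the probability space $[0,1]$ to bound the $L^1$ norm by the $L^2$ norm,
$$\int_0^1|g(t)-1|\,dt\le\Big(\int_0^1|g(t)-1|^2\,dt\Big)^{1/2},$$
which is the step that makes the Fourier machinery applicable and where the hypothesis $f\in L^2([0,1])$ enters.

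The key computation is the identity $\hat g(k)=\hat f(nk)$ for every $k\in\Z$. Substituting $u=(t+j)/n$ in each summand and using that $e^{2\pi ikj}=1$ for integers $k,j$ collapses the sum of integrals over the pieces $[j/n,(j+1)/n)$ into the single integral $\int_0^1 f(u)e^{-2\pi i(nk)u}\,du=\hat f(nk)$. In particular $\hat g(0)=\hat f(0)=1$, so the $k=0$ Fourier coefficient of $g-1$ vanishes, and Parseval's identity gives
$$\int_0^1|g(t)-1|^2\,dt=\sum_{k\in\Z\setminus\{0\}}|\hat f(nk)|^2.$$
Combining this with the two displayed inequalities finishes the proof. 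I expect the Fourier coefficient identity to be the only genuine obstacle; everything else is routine, and the clean form $\hat g(k)=\hat f(nk)$ rests precisely on the triviality of the phase factors $e^{2\pi ikj}$.
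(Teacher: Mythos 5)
Your proposal is correct and follows essentially the same route as the paper: pass from the total variation distance to an $L^2$ norm via Cauchy--Schwarz (the paper cites H\"older), then apply Parseval together with the key identity $\hat{f_n}(k)=\hat{f}(nk)$. The only cosmetic difference is that you verify this identity by explicitly folding the density and changing variables, whereas the paper obtains it in one line from $Ee^{2\pi ik(nX\bmod{1})}=Ee^{2\pi i(nk)X}$; both rest on the same triviality of the integer phase factors.
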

\begin{proof}
Let $f_n$ be the density of $nX\bmod{1}$.
By Hölder's inequality,
$$\delta\big(nX\bmod{1},U[0,1)\big)=
\frac{1}{2}\int_0^1|f_n(x)-1|dx
\le\frac{1}{2}\sqrt{\int_0^1|f_n(x)-1|^2dx}.$$
By Parseval's formula,
$$\int_0^1|f_n(x)-1|^2dx=\sum_{k\in\Z\setminus\{0\}}|\hat{f_n}(k)|^2,$$
where $\hat{f_n}(k)=Ee^{2\pi i k(nX\bmod{1})}=Ee^{2\pi i(nk)X}=\hat{f}(nk)$
for each $k\in\Z$.
\end{proof}

\begin{corollary}[Jeff Boyle, \cite{appliedfourier}]\label{fourierszacowanie}
Let $b>1$. Let $Y\sim U[1,b]$ and let $X=\log_bY$.
Then \begin{equation}\label{gdfdtygvncbcgtr}
\delta\big(nX\bmod{1},U[0,1)\big)<\frac{\ln b}{2\sqrt{12}n}
\textnormal{ \ \ for all }n\in\N.\end{equation}
\end{corollary}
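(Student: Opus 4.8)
The plan is to derive this directly from the Fourier bound in the preceding Proposition, so the real work is to compute the Fourier coefficients of the density of $X$ explicitly and then sum their squared moduli against that bound.

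First I would identify the density. Since $Y\sim U[1,b]$ has density $\frac{1}{b-1}$ on $[1,b]$ and $X=\log_bY$, the change of variables $Y=b^X$ shows that $X$ is supported on $[0,1]$ with density $f(x)=\frac{\ln b}{b-1}b^x$, which is exactly the $n=1$ density appearing in the proof of Theorem \ref{mojeszacowanie}. This $f$ lies in $L^2([0,1])$, so the Proposition applies and reduces everything to estimating $\sum_{k\neq0}|\hat f(nk)|^2$.

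Next I would compute $\hat f(k)=\int_0^1 f(x)e^{-2\pi ikx}\,dx$. Writing $b^x=e^{x\ln b}$ and integrating the single exponential $e^{x(\ln b-2\pi ik)}$, the upper boundary term produces $e^{\ln b-2\pi ik}=b$, because $e^{-2\pi ik}=1$ for integer $k$; the factor $b-1$ then cancels and leaves the clean formula $\hat f(k)=\frac{\ln b}{\ln b-2\pi ik}$, so that $|\hat f(k)|^2=\frac{(\ln b)^2}{(\ln b)^2+4\pi^2k^2}$. This cancellation is the one step I would watch most carefully, since it is precisely what yields the constant $2\sqrt{12}$; everything afterward is routine.

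Finally, substituting $k\mapsto nk$ and bounding the denominator by discarding the positive term $(\ln b)^2$, I would estimate
$$\sum_{k\in\Z\setminus\{0\}}|\hat f(nk)|^2=2(\ln b)^2\sum_{k=1}^\infty\frac{1}{(\ln b)^2+4\pi^2n^2k^2}<\frac{2(\ln b)^2}{4\pi^2n^2}\sum_{k=1}^\infty\frac{1}{k^2}=\frac{(\ln b)^2}{12n^2},$$
where the last equality uses the Basel value $\sum_{k\ge1}k^{-2}=\pi^2/6$. Taking the square root and multiplying by $\tfrac12$ gives exactly the claimed bound $\frac{\ln b}{2\sqrt{12}n}$, and the inequality is strict precisely because the term $(\ln b)^2>0$ was dropped. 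The only genuine obstacle is the Fourier computation; after that the result follows from a single comparison with the Basel sum.
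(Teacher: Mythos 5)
Your proof is correct and takes essentially the same route as the paper: identify the density $f(x)=\frac{\ln b}{b-1}b^x$ on $[0,1]$, compute $\hat{f}(k)=\frac{\ln b}{\ln b-2\pi ik}$, bound $|\hat{f}(nk)|^2$ by discarding the $(\ln b)^2$ term in the denominator, and conclude with the Basel sum $\sum_{k\ge1}k^{-2}=\pi^2/6$ via the preceding Proposition. The only difference is cosmetic: you carry out the Fourier integral explicitly, whereas the paper merely states the coefficient formula (and in doing so you implicitly correct a typo in the paper, which writes $n$ where $k$ is meant).
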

\begin{proof}
$X$ has density $$f(x)={\mathds{1}_{[0,1]}(x)}\frac{\ln b}{b-1}b^{x}$$
with $$\hat{f}(k)=\frac{\ln b}{\ln b-2\pi in}\text{ \ \ for each } k\in\Z.$$
Then
$$\sum_{k\in\Z\setminus\{0\}}|\hat{f}(nk)|^2<
\sum_{k\in\Z\setminus\{0\}}\frac{(\ln b)^2}{|2\pi ikn|^2}
=\frac{(\ln b)^2}{4\pi^2n^2}\sum_{k\in\Z\setminus\{0\}}\frac{1}{k^2}
=\frac{(\ln b)^2}{4\pi^2n^2}\cdot 2\frac{\pi^2}{6}.$$
\end{proof}

Our Theorem \ref{mojeszacowanie} gives a better estimation
than Jeff Boyle's Fourier analysis approach of Corollary \ref{fourierszacowanie}
because $2\sqrt{12}<8$.
Let us calculate the exact value of $\delta\big(nX\bmod{1},U[0,1)\big)$
in (\ref{gdfdtftygvvd}) and (\ref{gdfdtygvncbcgtr})
in order to assess the accuracy of these estimations.

\section{Comparison of the method with exactly computed values}

\begin{theorem}\label{porachowano}
Let $a>0$ and $b>1$. Let $X\sim U[1,b)$.
Then
\begin{equation}\label{hgd6t43fd5}
\delta\big(\log_bX^a\bmod{1},U[0,1)\big)=
\frac{u\ln u-u+1}{b^{1/a}-1},\end{equation}
where $$u=\frac{b^{1/a}-1}{\ln b^{1/a}}.$$
\end{theorem}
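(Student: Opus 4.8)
The plan is to reduce everything to a single base and then evaluate the total-variation integral $\frac12\int_0^1|g(y)-1|\,dy$ directly, where $g$ is the density of the mod-$1$ reduction on $[0,1)$. First I would change base: since $\log_b X^a=a\log_b X=\frac{\ln X}{\ln b^{1/a}}=\log_c X$ for $c=b^{1/a}$, and $\ln c=\ln b^{1/a}$, the target becomes $\delta\big(\log_c X\bmod 1,U[0,1)\big)$ with $X\sim U[1,c^{a})$, and the quantity in the statement is $u=\frac{c-1}{\ln c}$. Putting $V=\log_c X$ and changing variables via $X=c^{V}$, $dX=c^{V}\ln c\,dV$, against the uniform density $\frac{1}{c^{a}-1}$ on $[1,c^{a})$ yields $f_V(v)=\frac{\ln c}{c^{a}-1}\,c^{v}$ on $[0,a)$.

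Next I would fold modulo $1$: the density of $V\bmod 1$ at $y\in[0,1)$ is $g(y)=\frac{\ln c}{c^{a}-1}\,c^{y}\sum_k c^{k}$, the sum running over the integers $k$ with $y+k\in[0,a)$. The hard part will be this fold, because the number of overlapping translates depends on $y$ in general; the sum collapses cleanly precisely when the support length $a$ is a positive integer $n$, for then every $y\in[0,1)$ receives exactly the shifts $k=0,\dots,n-1$ and the geometric sum telescopes, $\sum_{k=0}^{n-1}c^{k}=\frac{c^{n}-1}{c-1}=\frac{c^{a}-1}{c-1}$, cancelling the denominator and leaving the single increasing exponential $g(y)=\frac{\ln c}{c-1}\,c^{y}$ on $[0,1)$, independent of $n$. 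I expect this folding/telescoping step to be the only genuine obstacle, since it is exactly where one must count the overlapping copies correctly.

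Finally, since this $g$ is increasing with mean $1$, the set $\{g>1\}$ is an interval $[y^{\ast},1)$ whose endpoint solves $g(y^{\ast})=1$, i.e. $c^{y^{\ast}}=\frac{c-1}{\ln c}=u$, so $y^{\ast}=\log_c u$; note $y^{\ast}\in(0,1)$ because $1<u<c$ for $c>1$. Then $\delta=\int_{y^{\ast}}^{1}\big(g(y)-1\big)\,dy=\frac{c-u}{c-1}-(1-\log_c u)$, and I would close the computation with the single identity $u\ln c=c-1$, equivalently $\frac{1}{\ln c}=\frac{u}{c-1}$, which turns $\log_c u$ into $\frac{u\ln u}{c-1}$ and collapses the expression to $\frac{1-u+u\ln u}{c-1}=\frac{u\ln u-u+1}{b^{1/a}-1}$. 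Everything after the fold is routine single-variable calculus together with this one algebraic identity relating $u$, $c$, and $\ln c$.
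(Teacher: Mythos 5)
Your proposal is, in substance, the paper's own proof. The paper also passes to the base $x=b^{1/a}$, folds the exponential density through a finite geometric sum to obtain the density $\mathds{1}_{[0,1]}(t)\frac{\ln x}{x-1}x^t$, and evaluates the distance on the single interval where this density stays on one side of $1$; the only cosmetic difference is that the paper works with the distribution function and writes $\delta=t_0-F(t_0)$ where $f(t_0)=1$, integrating $1-f$ over $\{f<1\}=[0,t_0)$, while you integrate $g-1$ over $\{g>1\}=[y^{*},1)$ --- the two are equal because $g-1$ integrates to zero over $[0,1]$. Your closing algebra via $u\ln c=c-1$ is the same identity the paper invokes as $f(\log_x u)=1$.

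The step you flagged as the only genuine obstacle is exactly where you and the paper part ways, and your caution is more justified than you may realize. The fold collapses precisely when $a$ is a positive integer, as you say. The paper's proof asserts $F(t)=\frac{x^t-1}{x-1}$ for all $t\in[0,1]$ by ``elementary calculations involving the sum of a finite geometric sequence'' with no restriction on $a$, and for non-integer $a$ that assertion --- and the theorem itself --- is false. Take $b=2$, $a=1/2$, so $x=b^{1/a}=4$ and $\log_b X^a=\log_4 X$ is supported on $[0,1/2)$: the set $A=[1/2,1)$ then has probability $0$ but Lebesgue measure $1/2$, so $\delta\ge 1/2$ (in fact $\delta=1/2$, since the density $4^y\ln 4$ exceeds $1$ throughout $[0,1/2)$), whereas the right-hand side of (\ref{hgd6t43fd5}) is $\frac{u\ln u-u+1}{3}\approx 0.169$ with $u=3/\ln 4$. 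So your proof establishes the theorem in exactly the cases where it holds, namely $a\in\N$ --- which is also the only case the paper ever uses, since its table takes integer exponents --- and it has the merit of making explicit the hypothesis that the paper's statement omits and its proof silently assumes.
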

\begin{proof}
Let $Y=\log_bX^a\bmod{1}$. Let $F(t)=P(Y\le t)$. Let $x=b^{1/a}$.
Elementary calculations involving the sum of a finite geometric sequence
yield $$F(t)=\frac{x^t-1}{x-1}\text{ \ \ \ for all }t\in[0,1].$$
$Y$ has density
$$f(t)=\mathds{1}_{[0,1]}(t)\Big(\frac{\ln x}{x-1}\Big)x^t
\text{ \ \ \ for all }t\in\R,$$
which is continuous and strictly increasing on $[0,1]$.
Therefore,
$$\delta\big(Y,U[0,1)\big)=\int_0^{t_0}(1-f(t))dt=t_0-F(t_0),$$
where $f(t_0)=1$.
Since $u=\frac{x-1}{\ln x}$, $f(\log_xu)=1$,
$$\log_xu-F(\log_xu)=\frac{\ln u}{\ln x}-\frac{u-1}{x-1}=
\frac{u\ln u}{x-1}-\frac{u-1}{x-1}.$$
\end{proof}

The following table shows that the estimation (\ref{gdfdtftygvvd})
in Theorem \ref{mojeszacowanie} is nearly as good as the exact value
(\ref{hgd6t43fd5}) calculated in Theorem \ref{porachowano}
and perceptibly better than the estimation (\ref{gdfdtygvncbcgtr})
provided by the Fourier analysis method.

\begin{center}\begin{tabular}{c|c| c| c}
n & exact (\ref{hgd6t43fd5}) & mine (\ref{gdfdtftygvvd})
& Fourier (\ref{gdfdtygvncbcgtr})\\
\hline
1&0.2688434&0.2878231&0.3323495\\
2&0.1413379&0.1439116&0.1661748\\
3&0.0951662&0.0959410&0.1107832\\
4&0.0716270&0.0719558&0.0830874\\
5&0.0573959&0.0575646&0.0664699\\
8&0.0359366&0.0359779&0.0415437\\
10&0.0287611&0.0287823&0.0332350\\
20&0.0143885&0.0143912&0.0166175\\
50&0.0057563&0.0057565&0.0066470\\
100&0.0028782&0.0028782&0.0033235\\
1000&0.0002878&0.0002878&0.0003323
\end{tabular}\end{center}

\section{Concluding remarks}

The presented method uses only the basic concepts of probability theory
and all its inequalities can be understood in terms of the areas under
graphs of functions of one variable with elementary geometrical estimations.
The accuracy is almost perfect in the tested case and perceptibly better
than the Fourier analysis approach.
There is no need to compute the Fourier coefficients of the density function.
The total variation of the density function is easily computed in the case
of monotonic or unimodal densities, and does not exceed twice
the maximum value of the density.


\begin{thebibliography}{}

\bibitem[Adhikari Sarkar 1968]{sarkar}
{A. K. Adhikari and B. P. Sarkar,
Distribution of Most Significant Digit in Certain Functions
Whose Arguments Are Random Variables,
The Indian Journal of Statistics 30(1/2), 1968, 47-58.}

\bibitem[Berger Hill 2011]{basictheory}{Arno Berger, Theodore P. Hill,
A basic theory of Benford's Law, Probability Surveys 8, 2011, 1–126.}

\bibitem[Boyle 1994]{appliedfourier}{Jeff Boyle,
An Application of Fourier Series to the Most Significant Digit Problem,
The American Mathematical Monthly 101(9), 1994, 879-886.}

\bibitem[Lolbert 2008]{lolbert}{Tamás Lolbert,
On the non-existence of a general Benford's law,
Mathematical Social Sciences 55, 2008, 103-106.}

\bibitem[Ross 2011]{ross}{Kenneth A.~Ross, Benford's Law, A Growth Industry,
The American Mathematical Monthly 118(7), 2011, 571-583.}

\bibitem[Sandron Hayford 2002]{population}
{Frédéric Sandron and Sarah R.~Hayford,
Do Populations Conform to the Law of Anomalous Numbers?,
Population (English Edition) 57(4/5), 2002, 755-761.}

\bibitem[Turner 1982]{turner}{Peter R.~Turner,
The Distribution of Leading Significant Digits,
IAV Journal of Numerical Analysis 2, 1982,  407-412.}

\end{thebibliography}
\end{document}